\definecolor{shadecolor}{gray}{0.875}
\numberwithin{equation}{section}
\theoremstyle{plain}
\newtheorem{prop}{Proposition}[section]
\newtheorem{theo}[prop]{Theorem}
\newtheorem{lemm}[prop]{Lemma}
\theoremstyle{definition}
\newtheorem{defi}[prop]{Definition}
\newtheorem{conj}[prop]{Conjecture}
\newtheorem{rema}[prop]{Remark}
\newtheorem{exam}[prop]{Example}
\author{Sho Tanimoto}
\address{Department of Mathematics, Faculty of Science, Kumamoto University, Kurokami 2-39-1 Kumamoto 860-8555 Japan}
\address{Priority Organization for Innovation and Excellence, Kumamoto University}
\email{stanimoto@kumamoto-u.ac.jp}
\title[Geometric aspects of Manin's Conjecture]{Geometric aspects of Manin's Conjecture}
\begin{document}
\date{\today}

\begin{abstract}
This is a report of the author's talk at Kinosaki Algebraic Geometry Symposium 2018. We discuss some recent progress on the geometry of thin exceptional sets in Manin's Conjecture.
\end{abstract}

\maketitle

\section{Introduction}

Let $X$ be a smooth projective Fano variety defined over a number field $F$ and $\mathcal L = (L, \|\cdot\|)$ be an adelically metrized divisor on $X$. Then one can associate a real valued function on the set of rational points:
\[
\mathsf H_{\mathcal L} : X(F) \rightarrow \mathbb R_{>0}
\]
to a triple $(F, X, \mathcal L)$. This is called the height function associated to $(F, X, \mathcal L)$.  (See \cite{CLT10} for the definitions of adelic metrizations and height functions.) 
When $L$ is ample, the height function enjoys the Northcott property, i.e., for any real number $T$ the set of rational points of height $\leq T$
\[
\{P \in X(F) | \mathsf H_{\mathcal L}(P) \leq T\}
\]
is a finite set. Thus one may define, for any subset $Q \subset X(F)$, the counting function
\[
N(Q, \mathcal L, T) = \# \{P \in Q | \mathsf H_{\mathcal L}(P) \leq T\}.
\]
Manin's Conjecture, originally formulated in \cite{BM}, predicts the asymptotic formula of $N(Q, \mathcal L, T)$ for an appropriate choice of $Q$ in terms of two birational invariants of $(X, L)$, denoted by $a(X, L), b(F, X, L)$. (See Section~\ref{subsec:invariants} for the definitions of these invariants.) 

To state Manin's Conjecture, we need to introduce the notion of {\it thin sets}: 
\begin{defi}[Thin sets]
Let $X$ be a variety defined over $F$. {\it A thin map} is a generically finite morphism to the image from a variety defined over $F$ such that if it is dominant, then it is not birational. {\it A thin set} is a finite union of subsets of $X(F)$, which are of the form of $f(Y(F))$ where $f$ is a thin map. 
\end{defi}
Here is a version of Manin's Conjecture using the notion of thin sets:
\begin{conj}[Manin's Conjecture]
\label{conj:Manin}
Let $F$ be a number field and $X$ be a smooth projective geometrically irreducible and geometrically rationally connected variety defined over $F$. Let $\mathcal L$ be an adelically metrized big and nef $\mathbb Q$-divisor on $X$.

Suppose that $X(F)$ is not thin. Then there exists $Z \subset X(F)$, which is contained in some thin subset of $X(F)$, such that we have
\[
N(X(F)\setminus Z, \mathcal L, T) \sim c(F, Z, \mathcal L)T^{a(X, L)}(\log T)^{b(F, X, L)-1}
\]
where $c(F, Z, \mathcal L)$ is Peyre's constant, introduced in \cite{Peyre} and \cite{BT}.
\end{conj}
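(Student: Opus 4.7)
The plan is to recognize up front that Conjecture~\ref{conj:Manin} is a major open problem; no general proof is presently known, and any honest proposal is really a structural strategy plus a description of where the difficulty concentrates. I will sketch the common backbone of the known cases and isolate the genuine obstacles.

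First I would construct the exceptional thin set $Z$ in purely geometric terms, \emph{before} doing any counting. The two invariants $a(X,L)$ and $b(F,X,L)$ provide the diagnostic: any closed subvariety $Y \subset X$ with $(a(Y, L|_Y), b(F, Y, L|_Y))$ larger than $(a(X,L), b(F,X,L))$ in lexicographic order is forced to accumulate too many rational points of bounded height and must be removed. More subtly, a thin cover $f : Y' \to X$ with the same $a$-invariant (and the same or larger $b$-invariant) can also accumulate, so $Z$ must contain $f(Y'(F))$. A central geometric input, precisely the kind of result surveyed in this paper, is that these \emph{Manin-accumulating} subvarieties and thin covers form a bounded family, so their union is contained in a genuine thin subset of $X(F)$. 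I would take this description as the definition of $Z$.

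Next I would attack the analytic count on $X(F)\setminus Z$, with the method dictated by the structure of $X$. For equivariant compactifications of algebraic groups (tori, additive groups, flag varieties) one writes down the height zeta function, uses harmonic analysis on the adelic points to continue it meromorphically past $\mathrm{Re}(s) = a(X,L)$, and reads off the residue. For low-degree complete intersections one applies the Hardy--Littlewood circle method. For toric and certain spherical varieties one passes to a universal torsor and reduces the count to a lattice-point problem over a suitable fundamental domain. In each setting the output is an asymptotic of the shape $c \cdot T^{a(X,L)}(\log T)^{b(F,X,L)-1}$, and identifying the leading constant $c$ with Peyre's Tamagawa-type expression requires tracking how the adelic metrization on $\mathcal L$ assembles into a convergent adelic integral (with the necessary convergence factors coming from the local $L$-factors of $\Pic(X_{\overline F})$).

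The hard part is twofold, and it is why the conjecture remains open. Analytically, there is no single technique that handles all smooth Fano varieties; producing the asymptotic on $X(F)\setminus Z$ currently proceeds case by case, and major classes (e.g.\ general cubic surfaces, most Fano threefolds) are completely out of reach. Geometrically, even \emph{defining} $Z$ correctly is delicate: recent counterexamples to the older closed-subset formulation show that one genuinely needs the thin-set version, because the accumulation can come from infinitely many Galois twists of a single cover, which collectively are not contained in any proper closed subvariety. Proving that the set of such accumulating thin covers is itself controllable in families --- the geometric program that this survey is reporting on --- is in my view the step on which any general attack on Conjecture~\ref{conj:Manin} must eventually rest, and I would expect it, rather than the analytic count, to be the true obstacle.
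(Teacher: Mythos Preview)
The statement is Conjecture~\ref{conj:Manin}, and the paper does not prove it: it is presented as an open problem (indeed, the paper explicitly notes that even the case of a single smooth cubic surface is unknown). There is therefore no proof in the paper to compare your proposal against. You correctly recognize this at the outset and, rather than attempting a spurious argument, give a structural overview of the known approaches (height zeta functions, circle method, universal torsors) together with the geometric construction of the exceptional set $Z$ via the $a$- and $b$-invariants; this is exactly the right response, and your description of the geometric side matches the program the paper surveys.
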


Originally Manin's Conjecture predicted that the set $Z$, which is called an exceptional set, is contained in a proper closed subset. The closed set version of Manin's Conjecture is known for many examples such as generalized flag varieties, toric varieties, low degree hypersurfaces, and so on. (See \cite{FMT89}, \cite{BT-0}, \cite{BT-general}, and \cite{Bir61}.) However, there are also many counterexamples to this version of Manin's Conjecture first found by Batyrev and Tschinkel in \cite{BT-cubic}. (See, e.g., \cite{LeRudulier}, \cite{BL16}, and \cite{BHB18} for other counterexamples to the closed set version of Manin's Conjecture.) In \cite{Peyre03}, Peyre first predicted that an exceptional set in Manin's Conjecture should be contained in a thin subset and so far there is no counterexample to this version of Manin's Conjecture. Note that for a rational variety, the set of rational points is not thin, and it is expected that for a smooth geometrically rationally connected variety, the set of rational points is not thin after taking some finite extension. So we believe that the assumption of Conjecture~\ref{conj:Manin} is not strict.

In a series of papers \cite{HTT15}, \cite{LTT14}, \cite{HJ16}, \cite{LTDuke}, \cite{Sen17}, \cite{LST18}, and \cite{LTsurvey}, Hassett, Tschinkel, Lehmann, Hacon, Jiang, Sengupta and the author study the geometry of exceptional sets in Manin's Conjecture. In \cite{LST18}, the author with Lehmann and Sengupta proposes the geometric construction of exceptional sets and proves that it is indeed contained in a thin set using the minimal model program \cite{BCHM} and the boundedness of singular Fano varieties proved in \cite{birkar16} and \cite{birkar16b} as well as the Hilbert Irreducibility Theorem in \cite{Serre}. One of main theorems of \cite{LST18} is the following theorem which shows the geometric consistency of Manin's Conjecture:

\begin{theo}{\cite[Theorem 1.2]{LST18}} \label{theo: maintheorem}
Let $X$ be a smooth geometrically uniruled projective variety over a number field $F$ and let $L$ be a big and nef divisor on $X$.  As we vary over all $F$-thin maps $f: Y \to X$ with $Y$ smooth, projective, and geometrically integral such that
\[
(a(X, L), b(F, X, L)) < (a(Y, f^*L), b(F, Y, f^*L))
\] 
in the lexicographic order, the points
\begin{equation*}
\bigcup_{f} f(Y(F))
\end{equation*}
are contained in a thin subset of $X(F)$.
\end{theo}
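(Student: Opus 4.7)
The plan is to partition the collection of relevant thin maps $f: Y \to X$ according to whether the $a$-invariant strictly jumps or only the $b$-invariant jumps, reduce each class to a finite list of algebraic families of thin maps via boundedness in the minimal model program, and finally deduce thinness of the union of images of $F$-points by the Hilbert Irreducibility Theorem of \cite{Serre}.

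First I would handle the $a$-jumping case: thin maps $f:Y \to X$ with $a(Y, f^*L) > a(X, L)$. These split into closed immersions, where $Y \subset X$ is a proper subvariety with larger $a$-invariant, and dominant generically finite covers with strict $a$-jump. In either subtype the hypothesis forces $K_Y + a(X, L) f^*L$ to be non-pseudo-effective on a suitable birational model, so running the MMP from \cite{BCHM} on $(Y, a(X,L)f^*L)$ produces a Mori fiber structure whose general fiber is an $\epsilon$-log canonical log Fano pair for a uniform $\epsilon$ depending only on $\dim X$. Boundedness of such Fano pairs, due to Birkar in \cite{birkar16,birkar16b}, then puts the contributing $Y$, modified to an adjoint rigid model, into finitely many algebraic families parametrized by $F$-schemes of finite type.

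The $b$-jumping case, $a(Y, f^*L) = a(X, L)$ but $b(F, Y, f^*L) > b(F, X, L)$, is the subtler step and I expect it to be the main obstacle, since no naive boundedness of $Y$ is available. The approach would be to first reduce to $f$ being adjoint rigid, that is, $\kappa(K_Y + a(X, L) f^*L) = 0$, by factoring through the Iitaka fibration and studying its general fiber, and then read off the $b$-invariant jump as coming from one of two phenomena: an increase in the geometric Picard rank of the minimal face of $\Eff(Y)$ containing $K_Y + a(X, L) f^*L$, or a strict refinement of the $\Gal(\bar F/F)$-action on that face. The geometric part is again controlled by MMP plus boundedness of the Iitaka base; the Galois part requires presenting the covers as twists of a universal family over a fixed finite étale cover of a bounded base, so that only finitely many twist types contribute nontrivially to the jump.

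Having assembled all contributing thin maps into finitely many families $\pi: \mathcal Y \to \mathcal B$ equipped with an evaluation morphism $\mathrm{ev}: \mathcal Y \to X$, I would conclude by the Hilbert Irreducibility Theorem. For each irreducible component of $\mathcal Y$, either $\mathrm{ev}$ is not dominant, in which case its image is immediately contained in a proper closed subset and is thin, or the Stein factorization of $\mathrm{ev}$ yields a finite cover of $X$ of degree $>1$, to which \cite{Serre} applies to show that $\mathrm{ev}(\mathcal Y(F))$ is thin in $X(F)$. Taking the union over the finitely many families, and noting that a finite union of thin sets is thin, yields the required thin envelope of $\bigcup_f f(Y(F))$.
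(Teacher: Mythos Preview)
Your overall architecture matches what the paper describes: split into the $a$-jumping and $b$-jumping cases, pass to adjoint rigid models via the canonical/Iitaka fibration, invoke Birkar's boundedness (\cite{birkar16}, \cite{birkar16b}) to reduce to finitely many families, and finish with Hilbert Irreducibility. Note, however, that the present paper does \emph{not} prove Theorem~\ref{theo: maintheorem} in full; it defers that to \cite{LST18} and only proves the restricted Theorem~\ref{theo:Duke} for subvarieties, under the extra hypotheses $\dim N^1(\overline{X}) = \dim N^1(X)$ and $(X,L)$ adjoint rigid. So your sketch is aiming at more than the paper itself carries out.

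There is a genuine gap in your concluding paragraph. The dichotomy you assert --- either $\mathrm{ev}$ is not dominant, or its Stein factorization has degree $>1$ --- omits the case where $\mathrm{ev}: \mathcal Y \to X$ is dominant and \emph{birational}. This case does occur: Proposition~\ref{prop:evaluation} only guarantees that the evaluation map from a dominant family of adjoint rigid subvarieties with $a$-value equal to $a(X,L)$ is generically finite, not that its degree exceeds $1$. When $\mathrm{ev}$ is birational it is not a thin map, so $\mathrm{ev}(\mathcal Y(F))$ being thin is by no means automatic. What must be shown instead is that the set of parameters $w \in W(F)$ whose fiber $Y_w$ satisfies $b(F, Y_w, L) \geq b(F, X, L)$ is thin in $W(F)$; this is exactly the content of the final proposition of Section~4 (\cite[Proposition~5.1]{LTDuke}), whose proof uses Hilbert Irreducibility together with the hypothesis $\dim N^1(X) = \dim N^1(\overline{X})$ to control the monodromy on N\'eron--Severi groups of the fibers. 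Without this step your argument does not close even in the subvariety case handled by the paper. For the full theorem including dominant covers, one additionally needs the \emph{face contracting} condition (Section~2.2) and the twist analysis sketched in Section~3 to cut the adjoint rigid $a$-covers down to finitely many; your sketch alludes to twists but does not supply this mechanism.
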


In this note we recall the construction of exceptional sets in \cite{LST18} and discuss some aspects of a proof of a weaker version of the above theorem, which was originally proved in \cite{LTDuke}.

\bigskip

\noindent
{\bf Acknowledgments.}
The author would like to thank Brendan Hassett, Brian Lehmann, Akash Sengupta, and Yuri Tschinkel for collaborations helping to shape his perspective on the geometry of Manin's Conjecture.
In particular the author would like to thank his advisor Yuri Tschinkel for introducing him this subject and Brian Lehmann for multiple fruitful collaborations.
The author would also like to thank the organizers of Kinosaki Algebraic Geometry Symposium 2018, Atsushi Kanazawa, Hokuta Uehara, and Kiwamu Watanabe, for their kind invitation and the opportunity to speak at their conference.
The author thanks Brian Lehmann for comments on an earlier draft of this paper.
Sho Tanimoto is partially supported by MEXT Japan, Leading Initiative for Excellent Young Researchers (LEADER).

\section{Background}

Let $F$ be a field of characteristic $0$. A variety defined over $F$ is an integral separated scheme of finite type over $F$. 

Let $X$ be a projective variety defined over $F$. Let $N^1(X)$ be the real N\'eron-Severi space of $X$, i.e., the space of $\mathbb R$-Cartier divisors modulo numerical equivalence. 
Let $N_1(X)$ be the dual of $N^1(X)$ which can be considered as the space of $1$-cycles modulo numerical equivalence.
We denote the pseudo effective cones by 
\[
\overline{\mathrm{Eff}}^1(X) \subset N^1(X), \quad \overline{\mathrm{Eff}}_1(X) \subset N_1(X),
\]
which are the closures of effective $\mathbb R$-cycles in the real vector spaces.
Dually we have the nef cones
\[
\mathrm{Nef}^1(X) \subset N^1(X), \quad \mathrm{Nef}_1(X) \subset N_1(X).
\]
The nef cone of divisors $\mathrm{Nef}^1(X)$ is dual to the pseudo-effective cone of curves $\overline{\mathrm{Eff}}_1(X)$ and the nef cone of curves $\mathrm{Nef}_1(X)$ is dual to the pseudo-effective cone of divisors $\overline{\mathrm{Eff}}^1(X)$.

\subsection{Geometric invariants}
\label{subsec:invariants}

Here we recall the definitions of $a(X, L), b(F, X, L)$ appearing in Manin's Conjecture.
\begin{defi}[Fujita invariants]
Let $X$ be a smooth projective variety defined over $F$ and $L$ be a big and nef $\mathbb Q$-divisor on $X$. {\it The Fujita invariant} (or {\it the $a$-invariant}) of $(X, L)$ is defined by
\[
a(X, L) = \min \{t \in \mathbb R | tL + K_X \in \overline{\mathrm{Eff}}^1(X) \}.
\]
As the notation suggests $a(X, L)$ is invariant under the base change of the ground field. By \cite{BDPP}, $a(X, L) > 0$ if and only if $X$ is geometrically uniruled. When $L$ is not big, we formally set $a(X, L) = + \infty$.

When $X$ is singular we take a resolution $\beta : \widetilde{X} \rightarrow X$ and define the Fujita invariant by
\[
a(X, L) := a(\widetilde{X}, \beta^*L).
\]
This is well-defined because $a(X, L)$ is a birational invariant. See \cite[Proposition 2.7]{HTT15}.
\end{defi}

Regarding the $a$-invariants, we frequently use the following notion:

\begin{defi}
Let $X$ be a projective variety defined over $F$ and $L$ be a big and nef $\mathbb Q$-Cartier divisor on $X$. We say that the pair $(X, L)$ is adjoint rigid if there exists a smooth resolution $\beta: \widetilde{X} \rightarrow X$ such that $a(X, L)\beta^*L + K_{\widetilde{X}}$ is rigid, i.e., it has Iitaka dimension $0$. Since $a(X, L)$ is a birational invariant, this property does not depend on the choice of a resolution $\beta: \widetilde{X} \rightarrow X$.
\end{defi}

We also use the following theorem implicitly through out the paper:

\begin{theo}{\cite{HMX13}}
Let $\mathcal X \rightarrow Y$ be a smooth family of geometrically uniruled projective varieties defined over $F$ and $L$ be $f$-big and $f$-nef $\mathbb Q$-divisor on $\mathcal X$. Then there exists a non-empty Zariski open subset $U \subset Y$ such that the function $U \rightarrow \mathbb R$ mapping $y \in U \mapsto a(X_y, L)$ is constant.
\end{theo}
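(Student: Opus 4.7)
The plan is to reduce the statement about constancy of the $a$-invariant to the constancy of the volume function in smooth families, which is the substantive HMX input. Throughout, set $f:\mathcal X \to Y$ and write $L_y := L|_{X_y}$. Since $f$ is smooth, adjunction gives $K_{\mathcal X/Y}|_{X_y} = K_{X_y}$, so restriction of the $\mathbb Q$-divisor $tL + K_{\mathcal X/Y}$ to a fiber is $tL_y + K_{X_y}$. Because each $X_y$ is geometrically uniruled and $L_y$ is big, $a(X_y,L_y)$ is a positive rational number (rationality following from \cite{BCHM}, as explained in \cite{HTT15}).

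First I would rewrite the $a$-invariant in terms of volumes: since $\overline{\mathrm{Eff}}^1(X_y)$ is the closure of the big cone and $tL_y + K_{X_y}$ is big for $t\gg 0$, one has
\[
a(X_y, L_y) \;=\; \inf\{\, t \in \mathbb Q_{>0} \;:\; \vol_{X_y}(tL_y + K_{X_y}) > 0 \,\}.
\]
So constancy of the right-hand side in $y$ reduces to understanding how the volume function varies in the family. Next I would invoke the HMX result that in a smooth projective family, for any $\mathbb Q$-Cartier divisor $D$ on the total space, the function $y \mapsto \vol_{X_y}(D|_{X_y})$ is constant on a non-empty Zariski open subset of $Y$. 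Applied with $D = tL + K_{\mathcal X/Y}$ for each fixed $t \in \mathbb Q_{>0}$, this produces a non-empty Zariski open $U_t \subset Y$ on which $y \mapsto \vol(tL_y + K_{X_y})$ is constant.

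The final step is a Noetherian argument. A priori one would need to intersect countably many opens $U_t$ as $t$ ranges over $\mathbb Q_{>0}$, but since $Y$ is Noetherian, the descending chain of finite intersections of the $U_t$ must stabilize, so $U := \bigcap_{t \in \mathbb Q_{>0}} U_t$ is a non-empty Zariski open. On $U$, the set
\[
S(y) \;:=\; \{\, t \in \mathbb Q_{>0} \;:\; \vol(tL_y + K_{X_y}) > 0 \,\}
\]
is independent of $y$, hence so is its infimum $a(X_y, L_y)$.

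The main obstacle is the invariance (not merely lower or upper semicontinuity) of the volume of $tL_y + K_{X_y}$ in a smooth family; this is a genuinely deep point from \cite{HMX13} that relies on sophisticated MMP technology. The remaining ingredients — using $f$-bigness of $L$ to guarantee that $tL + K_{\mathcal X/Y}$ is eventually $f$-big, using rationality of the threshold so that the $\inf$ over rationals is the correct $a$-invariant, and the Noetherian stabilization — are comparatively routine once that input is in hand.
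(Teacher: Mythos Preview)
Your overall strategy—recasting $a(X_y,L_y)$ as the threshold at which $\vol(tL_y+K_{X_y})$ becomes positive and then invoking deformation invariance of volumes from \cite{HMX13}—is the right idea and is essentially what underlies the argument in \cite[Theorem~4.3]{LTDuke} that the paper cites. However, your final ``Noetherian'' step is incorrect. Noetherianness of $Y$ gives the ascending chain condition on open subsets (equivalently DCC on closed subsets); it does \emph{not} force a descending chain of opens to stabilize. For instance, on $\mathbb{A}^1$ over a countable field one may remove the closed points one at a time to obtain a strictly decreasing sequence of dense opens whose intersection is only the generic point. So there is no reason for $\bigcap_{t\in\mathbb{Q}_{>0}} U_t$ to be open, and the proof breaks at this point.

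The fix is to avoid the countable intersection altogether. First, ordinary upper semicontinuity of $h^{0}$ already gives $\vol(tL_y+K_{X_y})\geq \vol(tL_\eta+K_{X_\eta})$ for every $y$ and every $t$; hence for all rational $t>a_0:=a(X_\eta,L_\eta)$ the divisor $tL_y+K_{X_y}$ is big, and letting $t\to a_0$ yields $a(X_y,L_y)\leq a_0$ for \emph{every} $y$, with no HMX input needed. For the reverse inequality one applies invariance of (log) plurigenera a single time, at the rational value $t=a_0$: since $\vol(a_0L_\eta+K_{X_\eta})=0$, invariance forces $\vol(a_0L_y+K_{X_y})=0$, so $a_0L_y+K_{X_y}$ is not big and $a(X_y,L_y)\geq a_0$. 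Note also that the HMX/Siu--type statement you are using is specific to adjoint divisors $K_{\mathcal X/Y}+\Delta$ with suitable positivity on $\Delta$, and for smooth families it gives constancy of the plurigenera over all of $Y$, not merely over a dense open; you have simultaneously overstated its generality (``any $\mathbb{Q}$-Cartier $D$'') and understated its conclusion, and it is precisely the latter that rescues the argument.
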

\begin{proof}
For a proof, see \cite[Theorem 4.3]{LTDuke}.
\end{proof}

\begin{defi}[the $b$-invariants]
Let $X$ be a smooth geometrically uniruled projective variety defined over $F$ and $L$ be a big and nef $\mathbb Q$-divisor on $X$. {\it The $b$-invariant} of $(X, L)$ is defined by
\begin{align*}
b(F, X, L) = \textrm{the }& \text{codimension of the minimal supported face}\\
& \text{of $\overline{\mathrm{Eff}}^1(X)$ containing $K_{X} + a(X, L)L$.}
\end{align*}
This value $b(F, X, L)$ is not invariant under the base change as the Picard rank of a projective variety $X$ depends on the ground field.
Again we define the $b$-invariant even for singular varieties via a passage to a smooth model as the case of Fujita invariants. This is well-defined because the $b$-invariant is a birational invariant. See \cite[Proposition 2.10]{HTT15}.
\end{defi}

The most important case of the $a, b$-invariants is the case of Fano varieties and $L$ being the anticanonical divisor.
\begin{exam}
Let $X$ be a smooth projective Fano variety defined over $F$ and $L = -K_X$. Then we have
\[
a(X, L) = 1, \quad b(F, X, L) = \dim N^1(X).
\]
\end{exam}

Here is the first counterexample to the closed set version of Manin's Conjecture:
\begin{exam}{\cite{BT-cubic}}
Let $F$ be an arbitrary number field. Let $X \subset \mathbb P^3_x \times \mathbb P^3_y$ be the hypersurface defined by
\[
\sum_{i = 0}^3 x_i y_i^3 = 0.
\]
Then $X$ is a smooth projective Fano fivefold with the Picard rank $2$.
Let $L = -K_X$ with an adelic metrization. Then we have
\[
a(X, L) = 1, \quad b(F, X, L) = 2.
\]
Thus if the closed set version of Manin's Conjecture is true, then there exists a Zariski open subset $X^\circ \subset X$ such that 
\[
N(X^\circ(F), \mathcal L, T) \sim cT (\log T)
\]
for some $c >0$. On the other hand consider the cubic surface fibration $\pi_1 : X \rightarrow \mathbb P^3_x$. For any $P = (x_0: x_1: x_2:x_3) \in \mathbb P^3(F)$ such that $x_i \in (F^\times)^3$ for any $i$, the fiber $X_P$ satisfies that $a(X_p, L) = 1$, and $b(F, X_p, L) = 7$ if $\sqrt{-3} \in F$ and $b(F, X_p, L) = 4$ if $\sqrt{-3} \not\in F$. Batyrev and Tschinkel showed that, under the assumption $\sqrt{-3} \in F$, for any open subset $U \subset X_P$, we have
\[
N(U(F), \mathcal L, T) \gg T(\log T)^3.
\]
(\cite{FLS18} removes the condition of $\sqrt{-3} \in F$.)
Thus this is a contradiction and $(X, L)$ cannot satisfy the closed set version of Manin's Conjecture.
The closed set version of Manin's Conjecture is expected to be true for smooth cubic surfaces after removing the contribution from lines. (See \cite[Theorem 9.1]{LTsurvey}.) At the moment of writing this report, Manin's Conjecture is not known even for a single smooth cubic surface defined over a number field. Proving Conjecture~\ref{conj:Manin} for $(X, L)$ is out of reach at this moement. (Though the Fano fivefold defined by $\sum_i x_i y_i^2 = 0$ has been handled in \cite{BHB18}.)
\end{exam}

\subsection{Face contracting}

In this section we recall the notion of face contracting which is a key notion for the geometric construction of exceptional sets in \cite{LST18}. First we note the following lemma:
\begin{lemm}
Let $X$ be a smooth geometrically unirulded projective variety defined over a field $F$ of characteristic $0$ and $L$ be a big and nef $\mathbb Q$-divisor on $X$. Let $f :  Y \rightarrow X$ be a generically finite dominant morphism from a projective variety. Then we have
\[
a(Y, f^*L) \leq a(X, L).
\]
\end{lemm}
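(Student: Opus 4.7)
The plan is to reduce to the smooth case and then combine the ramification formula with the fact that pullback under a surjective morphism preserves pseudo-effectivity.

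First I would use the birational invariance of the $a$-invariant to replace $Y$ by a resolution of singularities. Concretely, let $\beta : \widetilde Y \to Y$ be a resolution and set $\tilde f := f \circ \beta : \widetilde Y \to X$, which remains a generically finite dominant morphism. By the extension of the definition of $a(\cdot,\cdot)$ to singular varieties recalled above, $a(Y, f^*L) = a(\widetilde Y, \tilde f^*L)$, and since $\tilde f$ is generically finite and $L$ is big and nef, $\tilde f^*L$ is big and nef. So it suffices to prove $a(\widetilde Y, \tilde f^* L) \leq a(X,L)$ under the additional assumption that $\widetilde Y$ is smooth. Since $X$ is smooth by hypothesis, $\tilde f$ is now a generically finite dominant morphism between smooth projective varieties, and the ramification formula gives
\[
K_{\widetilde Y} = \tilde f^* K_X + R,
\]
with $R$ an effective divisor (the ramification divisor).

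Next, set $a := a(X, L)$, so that by definition $aL + K_X \in \overline{\mathrm{Eff}}^1(X)$. The key general fact I would invoke is that pullback of pseudo-effective $\mathbb Q$-divisors under a surjective morphism of projective varieties is again pseudo-effective; this follows by approximating a pseudo-effective class by effective $\mathbb Q$-divisors and pulling back, together with continuity. Applied to $\tilde f$, we obtain that $a\,\tilde f^*L + \tilde f^* K_X$ is pseudo-effective on $\widetilde Y$.

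Finally I would combine these two inputs: writing
\[
a\,\tilde f^*L + K_{\widetilde Y} \;=\; \bigl(a\,\tilde f^*L + \tilde f^* K_X\bigr) + R,
\]
the right-hand side is a sum of a pseudo-effective divisor and an effective divisor, hence pseudo-effective. By the definition of the Fujita invariant, this forces $a(\widetilde Y, \tilde f^*L) \leq a = a(X, L)$, which is what we wanted. The argument is essentially a two-line computation once the correct setup is in place; the only point requiring any care is the reduction to the smooth case and the invocation of pullback-preserves-pseudo-effectivity, both of which are routine.
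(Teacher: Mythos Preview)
Your argument is correct and follows essentially the same route as the paper: reduce to the smooth case by resolution, apply the ramification formula $K_{\widetilde Y}=\tilde f^*K_X+R$ with $R$ effective, and then observe that $a(X,L)\tilde f^*L+K_{\widetilde Y}=\tilde f^*(a(X,L)L+K_X)+R$ is pseudo-effective. The only difference is that you make explicit the step that pullback by a surjective morphism preserves pseudo-effectivity, which the paper uses implicitly.
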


\begin{proof}
After applying a resolution we may assume that $Y$ is smooth. Then by the ramification formula there exists an effective divisor $R \geq 0$ on $Y$ such that
\[
K_Y  = f^*K_X +R.
\]
Now we have
\[
a(X, L)f^*L + K_Y = f^*(a(X, L)L + K_X) + R
\]
which is pseudo-effective. Thus our assertion follows.
\end{proof}

\begin{defi}
Let $X$ be a smooth geometrically uniruled projective variety defined over a field $F$ and $L$ be a big and nef $\mathbb Q$-divisor on $X$. We define the supported face associated to $(X, L)$ by
\[
F(X, L) = \mathrm{Nef}_1(X) \cap \{ (a(X, L)L + K_X) = 0\}.
\]
Note that we have $\dim F(X, L) = b(F, X, L)$.

Let $f: Y \rightarrow X$ be a generically finite dominant morphism from a smooth projective variety. A cover $f$ is called {\it an $a$-cover} if $a(Y, f^*L) = a(X, L)$.

An $a$-cover $f :  Y \rightarrow X$ is called {\it face contracting} if the induced map
\[
f_* : F(Y, L) \rightarrow F(X, L)
\]
is not injective. If $b(F, X, L) < b(F, Y, f^*L)$ is true, then $f$ is automatically face contracting. However the converse is not true as \cite[Example 3.7]{LT17} shows.
\end{defi}

\section{The construction of exceptional sets and the main theorem}

In this section, we introduce the construction of a conjectural exceptional set from \cite{LST18} and discuss main theorems in this paper. We work over a number field $F$ in this section.

Let $X$ be a geometrically uniruled and geometrically integral smooth projective variety defined over $F$ and $L$ be a big and nef $\mathbb Q$-divisor on $X$. For simplicity we assume that $(X, L)$ is adjoint rigid, i.e., $a(X, L)L + K_X$ has Iitaka dimension $0$.
This condition ensures that $X$ is geometrically rationally connected by \cite{HM07}.
We denote the augmented base locus of $L$ by $\bold B_+(L)$: this is necessary a proper closed subset. (See \cite{Nak04} for its definition and basic properties.)
We set $Z_0$ to be the set of rational points on $\bold B_+(L)$.

Next as $f: Y \rightarrow X$ varies all $F$-thin maps such that
\begin{itemize}
\item $Y$ is geometrically integral, smooth and projective;
\item $\dim Y < \dim X$ or $a(Y, f^*L)f^*L + K_Y$ has positive Iitaka dimension;
\item and we have the inequality
\[
(a(X, L), b(F, X, L)) \leq (a(Y, f^*L), b(F, Y, f^*L))
\]
in the lexicographic order, 
\end{itemize}
we define the set $Z_1 \subset X(F)$ by
\[
Z_1 = \bigcup_f f(Y(F)).
\]

Finally as $f: Y \rightarrow X$ varies all $F$-thin maps such that
\begin{itemize}
\item $Y$ is geometrically integral, smooth and projective and $f$ is dominant;
\item $a(Y, f^*L)f^*L + K_Y$ has Iitaka dimension $0$;
\item we have the inequality
\[
(a(X, L), b(F, X, L)) \leq (a(Y, f^*L), b(F, Y, f^*L))
\]
in the lexicographic order;
\item and $f$ is face contracting,
\end{itemize}
we define the set $Z_2 \subset X(F)$ by
\[
Z_2 = \bigcup_f f(Y(F)).
\]
In \cite{LST18}, we prove the following theorem using BAB conjecture:

\begin{theo}{\cite[Theorem 3.5]{LST18}}
The set $Z_0 \cup Z_1 \cup Z_2$ is contained in a thin subset of $X(F)$.
\end{theo}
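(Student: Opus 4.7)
The plan is to treat each of $Z_0$, $Z_1$, $Z_2$ separately and use that a finite union of thin sets is thin. The piece $Z_0$ is immediate: $\mathbf{B}_+(L)$ is a proper Zariski-closed subset of $X$, so $Z_0 \subset \mathbf{B}_+(L)(F)$ is contained in the image of the inclusion of its finitely many irreducible components, each a non-dominant morphism to $X$, hence thin.

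For $Z_1$, the plan is first to reduce the dominant non-adjoint-rigid sub-case to the subvariety sub-case. Given dominant generically finite $f\colon Y \to X$ with $\dim Y = \dim X$ and $a(Y,f^*L)f^*L + K_Y$ of positive Iitaka dimension, I would pass to the Iitaka fibration $Y \dashrightarrow W$ of this adjoint divisor. A general fibre $Y_w$ is adjoint rigid, of strictly smaller dimension, and its induced map to $X$ has $(a,b)$-invariants (via the codimension-of-face description of $b$) that still dominate $(a(X,L), b(F,X,L))$ lexicographically, so $f(Y(F)) = \bigcup_{w \in W(F)} f(Y_w(F))$ reduces to the case $\dim Y < \dim X$. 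In that case, $V := f(Y) \subsetneq X$, and the key assertion to establish is that subvarieties $V \subsetneq X$ with the prescribed $(a,b)$-inequality form a bounded family over $X$. This is where BAB enters: running the MMP on the adjoint pair $(V, a(X,L)L|_V)$ via \cite{BCHM} produces a weak Fano type pair, and the boundedness of singular Fano varieties of \cite{birkar16, birkar16b} confines the resulting pairs to finitely many deformation families. Packaging these into finitely many quasi-projective total spaces $\mathcal V_i \to T_i$ with evaluation maps $\mathcal V_i \to X$ that are generically finite of strictly smaller relative dimension, the Hilbert Irreducibility Theorem \cite{Serre} then yields a thin subset of $X(F)$ covering $Z_1$.

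For $Z_2$, the relevant $f\colon Y \to X$ are dominant, generically finite, adjoint rigid $a$-covers for which $f_*\colon F(Y, f^*L) \to F(X, L)$ fails to be injective. I would aim to show that up to birational equivalence over $X$ there are only finitely many families $\mathcal Y_j \to T_j \to X$ producing such covers, and then invoke Hilbert Irreducibility across the parameter spaces $T_j$. The face-contracting hypothesis, together with adjoint rigidity and $a(Y, f^*L) = a(X, L)$, places strong numerical constraints on the ramification divisor $R = K_Y - f^*K_X$: since $a(X,L)f^*L + K_Y = f^*(a(X,L)L + K_X) + R$ is itself rigid while the map of supported faces collapses a nontrivial direction, one can extract a Fano-type pair to which BAB applies, bounding simultaneously $\deg f$ and the birational class of $Y$ essentially as in \cite{LST18}. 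The conclusion then follows exactly as in the $Z_1$ case.

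The main obstacle is the boundedness claim underlying both the $Z_1$ subvariety case and the $Z_2$ face-contracting case. The hypothesis $(a(X,L), b(F,X,L)) \leq (a(Y,f^*L), b(F,Y,f^*L))$ is purely numerical, and converting it into geometric structure accessible to BAB requires carefully running the MMP on the adjoint pair, tracking how the extremal face of $\overline{\mathrm{Eff}}^1$ transforms, and identifying a weak-Fano-type output of bounded index. In the $Z_2$ case one must control both the birational type of $Y$ and the degree of $f$ at once, and it is here that the full strength of \cite{birkar16, birkar16b} combined with \cite{BCHM} is indispensable; without it, neither Chow-scheme finiteness nor \cite{Serre} can be applied.
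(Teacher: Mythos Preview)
The paper does not actually prove this theorem; it merely cites it from \cite{LST18} and then, in Section~4, proves only the weaker Theorem~\ref{theo:Duke}, which concerns \emph{subvarieties} (not arbitrary thin maps) and carries the extra hypothesis $\dim N^1(X)=\dim N^1(\overline{X})$. So there is no in-paper proof of the full statement to compare against, only the skeleton of the subvariety case.

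Where your outline overlaps with that skeleton ($Z_0$ trivially thin; reduce to adjoint rigid pieces via the canonical fibration; use BAB/Birkar to get boundedness in Chow; then invoke Hilbert irreducibility), the overall shape is right. However, two steps in your $Z_1$ argument do not match what the paper actually does and would not go through as written. First, after reducing to $\dim Y<\dim X$ you pass to the image $V=f(Y)$ and claim boundedness for the $V$'s satisfying the $(a,b)$-inequality; but the hypothesis is on $(Y,f^*L)$, not on $(V,L|_V)$, and while $a(Y,f^*L)\le a(V,L)$ gives the $a$-part, nothing you wrote controls $b(F,V,L)$. The paper sidesteps this entirely: Theorem~\ref{theo:Duke} starts from subvarieties, and the reduction to adjoint rigid ones is Lemma~4.3, not a passage through images of covers. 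Second, your sentence about ``evaluation maps $\mathcal V_i\to X$ that are generically finite of strictly smaller relative dimension, then Hilbert Irreducibility'' skips the crucial dichotomy: by Proposition~\ref{prop:evaluation} the evaluation map is generically finite, and if its degree is $>1$ the contribution is thin for free, but if it is \emph{birational} the image of rational points is all of $X(F)$ and is not thin. In that case the paper applies Proposition~4.9 (a monodromy/Hilbert irreducibility argument on the N\'eron--Severi lattice, using the Picard rank hypothesis) to show that the fibers with $b(F,X_y,L)\ge b(F,X,L)$ lie over a thin set in the base. Your outline does not isolate this step.

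Your $Z_2$ paragraph is essentially a wish list rather than an argument: ``face contraction plus adjoint rigidity places numerical constraints on $R$, extract a Fano-type pair, apply BAB'' names the ingredients but gives no mechanism bounding $\deg f$ or the birational type of $Y$, and the paper offers no hint here beyond the citation to \cite{LST18}. Since the paper itself defers the full theorem to that reference, this is not a discrepancy with the paper so much as an acknowledgment that the real work for $Z_2$ lies outside this note.
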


\begin{rema}
In the construction of the set $Z_2$, it is important to insist that $f$ is face contracting. Otherwise, the the above theorem is no longer true. See \cite[Example 8.7]{LTDuke}.
\end{rema}
Thus it is natural to propose the following refinement of Manin's Conjecture:
\begin{conj}
In Conjecture~\ref{conj:Manin}, assuming $a(X, L)L + K_X$ is rigid,  we can take $Z$ to be $Z_0 \cup Z_1 \cup Z_2$.
\end{conj}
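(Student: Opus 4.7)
I would prove thinness of each of $Z_0$, $Z_1$, $Z_2$ separately, so that their union is thin as a finite union of thin sets. The argument for $Z_1$ and $Z_2$ rests on a bounded-family description of the relevant thin maps, obtained via the minimal model program (BCHM) and the BAB conjecture, to which one then applies the Hilbert Irreducibility Theorem. The set $Z_0$ is immediate: since $L$ is big, $\mathbf{B}_+(L)$ is a proper closed subset of $X$, so $Z_0$ is contained in a closed thin subset.

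For $Z_1$ I would split the contributing thin maps $f : Y \to X$ into those with $\dim Y < \dim X$ and those for which $a(Y, f^*L) f^*L + K_Y$ has positive Iitaka dimension. For the first type, Stein factorization and resolution reduce the problem to thin maps factoring through closed subvarieties $W \subsetneq X$ with $a(W, L|_W) \geq a(X, L)$ and the corresponding $b$-inequality. Running the MMP on the adjoint pair $a(W, L|_W) L|_W + K_W$ and applying BAB bounds the resulting adjoint rigid Fano-type models, so such $W$ fit into countably many bounded families $\mathcal{W} \to \mathcal{B}$ with $\mathcal{B}$ of finite type. Applying Hilbert Irreducibility to each family and organizing the countable union yields a thin set containing this contribution. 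For the second type, factoring $f$ through the Iitaka fibration of $a(Y, f^*L) f^*L + K_Y$ reduces to the first case on a lower-dimensional base.

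For $Z_2$, the map $f : Y \to X$ is a dominant, adjoint rigid $a$-cover that is face contracting. The central technical step is to show such covers are parametrized by countably many bounded families of morphisms. After passing to canonical models via MMP and using BAB to bound the resulting Fano-type varieties, one obtains families $\mathcal{Y} \to \mathcal{X} \to \mathcal{B}$ with $\mathcal{B}$ of finite type exhausting all such $f$ up to a natural birational equivalence of covers. The face-contracting hypothesis is essential here: it guarantees that the Galois action on the generic fiber of $f$ acts nontrivially on the portion of $F(Y, f^*L)$ that would be needed to lift $F$-rational classes from $F(X, L)$, which is precisely the hypothesis needed to apply Hilbert Irreducibility fiberwise and conclude that $\bigcup_f f(Y(F))$ is thin.

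The main obstacle is the boundedness of face-contracting $a$-covers in the construction of $Z_2$. Because neither $a$ nor $b$ strictly increases under these maps, the pair $(a, b)$ cannot serve as a measure of complexity; instead, finiteness has to be extracted from the combination of adjoint rigidity (which forces a Fano-type structure on the canonical model via BCHM) and BAB (which bounds the resulting models). Once boundedness is in hand the Hilbert Irreducibility step is essentially formal, with the face-contracting hypothesis supplying the nontriviality that prevents the cover from being birational over a dense subset.
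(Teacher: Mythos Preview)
The statement you are attempting to prove is a \emph{conjecture}, not a theorem: the paper does not prove it, and indeed it is open. What you have outlined is a proof that $Z_0 \cup Z_1 \cup Z_2$ is contained in a thin subset of $X(F)$. That is precisely the content of the \emph{preceding} theorem in the paper (\cite[Theorem 3.5]{LST18}), not of the conjecture. The conjecture asserts something much stronger: that in Conjecture~\ref{conj:Manin} one may take the exceptional set $Z$ to be exactly $Z_0 \cup Z_1 \cup Z_2$, i.e.\ that after removing this specific set one actually obtains the asymptotic
\[
N\bigl(X(F)\setminus (Z_0\cup Z_1\cup Z_2),\ \mathcal L,\ T\bigr) \ \sim\ c(F, Z, \mathcal L)\, T^{a(X,L)} (\log T)^{b(F,X,L)-1}.
\]
Proving this would in particular entail proving Manin's Conjecture itself for $(X,L)$, which is wide open. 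Thinness of $Z_0\cup Z_1\cup Z_2$ is only the consistency check that makes the conjecture sensible; it is necessary but nowhere near sufficient.

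As an aside, even as a sketch of the thinness theorem your outline has a loose point: you repeatedly invoke ``countably many bounded families'' and then say the resulting union is thin. A countable union of thin sets need not be thin. The actual argument in \cite{LST18} uses BAB and \cite{BCHM} to produce a \emph{single} bounded (finite type) parameter space, and then extracts finitely many thin maps; the passage from boundedness to finitely many contributions is where much of the work lies, and your sketch elides it.
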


\section{A proof}

In this section, we explain a proof of the following weaker theorem which was originally proved in \cite{LTDuke}:
\begin{theo}{\cite[Theorem 1.5]{LTDuke}}
\label{theo:Duke}
Let $X$ be a geometrically uniruled and geometrically integral smooth projective variety defined over a number field $F$ and let $L$ be a big and nef $\mathbb Q$-divisor on $X$. Suppose that the geometric Picard rank and the arithmetic Picard rank of $X$ coincide, i.e., $\dim N^1(\overline{X}) = \dim N^1(X)$ and $a(X, L)L + K_X$ is rigid. As $Y$ varies all geometrically integral subvarieties $Y\subset X$ defined over $F$ such that 
\[
(a(X, L), b(X, L)) \leq (a(Y, L), b(F, Y, L))
\]
in the lexicographic order, the points 
\[
Z = \bigcup_Y Y(F) \subset X(F)
\]
are contained in a thin subset of $X(F)$.
\end{theo}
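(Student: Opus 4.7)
The plan is to parametrize the subvarieties $Y \subset X$ in the statement by the Hilbert scheme of $X$ and then analyze the resulting universal evaluation maps component by component. As a first step I would reduce to the case where each $Y$ is adjoint rigid: if $a(Y, L|_Y)L|_Y + K_Y$ has positive Iitaka dimension, then a general fiber of its Iitaka fibration is a strictly lower-dimensional adjoint rigid subvariety of $X$ with the same $a$-invariant and with $b$-invariant no smaller than that of $Y$, so it suffices to treat adjoint rigid $Y$ satisfying the $(a,b)$-inequality.

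Next I would invoke boundedness: the set of adjoint rigid subvarieties $Y \subset X$ with $a(Y, L|_Y) \geq a(X, L)$ lies in countably many bounded families (established in \cite{LTDuke} by a direct argument, and an immediate consequence of \cite{birkar16, birkar16b} in the general framework of \cite{LST18}). Stratifying by the locally constant invariants $(a(Y, L|_Y), b(F, Y, L|_Y))$ yields countably many components $W_j \subset \mathrm{Hilb}(X)$, each carrying a universal family $\pi_j : \mathcal U_j \to W_j$ and an evaluation map $s_j : \mathcal U_j \to X$. For each $j$ the analysis splits into three cases. If $s_j$ is not dominant, its image is a proper closed subset of $X$, hence a thin contribution. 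If $s_j$ is dominant and generically finite of degree $\geq 2$, then $s_j$ is not birational and the Hilbert Irreducibility Theorem of \cite{Serre} places $s_j(\mathcal U_j(F))$ inside a thin subset of $X(F)$. The remaining case is that $s_j$ is dominant and either birational onto $X$ or has positive-dimensional generic fiber; in both sub-cases $\{Y_w\}_{w \in W_j}$ is a covering family of proper subvarieties of $X$.

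The main obstacle is ruling out this covering case, and it is here that both standing hypotheses become essential. The equality $\dim N^1(\overline X) = \dim N^1(X)$ produces the chain $b(F, Y, L|_Y) \leq b(\overline F, Y, L|_Y) \leq b(\overline F, X, L) = b(F, X, L)$, and combined with the hypothesis on $Y$ this forces equality throughout and moreover forces the pushforward $F(Y, L|_Y) \to F(X, L)$ to be an isomorphism of supported faces; in particular the cover is not face contracting. Given a covering family of adjoint rigid subvarieties whose supported faces are matched in this way, a geometric argument on $\mathcal U_j$ via the adjunction and ramification formulas shows that $a(X, L)L + K_X$ must acquire positive Iitaka dimension, contradicting the adjoint rigidity of $(X, L)$. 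The technical heart of the proof is exactly this last step; in \cite{LST18} the arithmetic Picard rank assumption is replaced by the intrinsic face contracting condition, which is the correct notion to carry the argument through in full generality without any Galois hypothesis on $X$.
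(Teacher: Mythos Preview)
Your reduction to adjoint rigid subvarieties and your use of boundedness to get finitely many families are correct and match the paper. The gap is in the final ``covering'' case, and it is a genuine one.

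When $s_j$ is birational, you try to derive a contradiction with the adjoint rigidity of $(X,L)$. But this case is \emph{not} vacuous: any smooth Fano variety with a nontrivial Mori fibration (e.g.\ $\mathbb P^1\times\mathbb P^1$ with $L=-K_X$) furnishes a birational evaluation map whose general fibre $Y_w$ is adjoint rigid with $a(Y_w,L)=a(X,L)$, while $(X,L)$ remains adjoint rigid. No adjunction/ramification argument forces $a(X,L)L+K_X$ to acquire positive Iitaka dimension here. What actually happens is that the \emph{geometric} $b$-invariant of a general fibre can be large, but for a Hilbert--irreducible set of $w\in W_j(F)$ the Galois action on $N^1(\overline{Y_w})$ agrees with the full monodromy representation, which cuts the \emph{arithmetic} invariant $b(F,Y_w,L)$ strictly below $b(F,X,L)$. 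Only on the thin complement can $b(F,Y_w,L)\ge b(F,X,L)$ occur. This is precisely the content of the paper's Proposition~\ref{prop:evaluation} (the evaluation map is always generically finite, so your ``positive-dimensional fibre'' sub-case never arises) together with the final proposition, \cite[Proposition~5.1]{LTDuke}, which is where Hilbert Irreducibility is actually invoked and where both hypotheses $\dim N^1(\overline X)=\dim N^1(X)$ and adjoint rigidity are used.

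Two related symptoms of the same misconception: you invoke Hilbert Irreducibility in the $\deg\ge 2$ case, where it is unnecessary (the image of a degree $\ge 2$ dominant map is thin by definition), and you omit it in the birational case, where it is essential. Also, your ``stratify by the locally constant $(a,b)$'' step is problematic: the arithmetic $b$-invariant $b(F,Y_w,L)$ is not constructible in $w$, so the putative stratum where it equals $b(F,X,L)$ is not an algebraic locus to which your family-by-family analysis applies. The paper avoids this by analysing the whole family at once and proving directly that the bad $w$'s are thin.
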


First we recall the following Lemma:
\begin{lemm}{\cite[Theorem 4.5]{LTT14} and \cite[Lemma 2.6]{LST18}}
Let $Y$ be a geometrically uniruled and geometrically integral smooth projective variety defined over a number field $F$ and let $L$ be a big and nef $\mathbb Q$-divisor on $Y$. Let $\pi : Y \dashrightarrow Z$ be the canonical fibration associated to $a(Y, L)L + K_Y$ where its existence is justified by \cite{BCHM}.
Then there exists a Zariski open subset $Z^\circ \subset Z$ such that for any $z \in Z^\circ(F)$, we have
\[
a(Y, L) = a(Y_z, L), \quad b(F, Y, L) \leq b(F, Y_z, L),
\]
where $Y_z$ is a fiber of $\pi$ at $z$.
\end{lemm}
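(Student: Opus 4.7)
The plan is to split the argument into a geometric reduction followed by two separate comparisons, one for the $a$-invariant and one for the $b$-invariant.

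First I would pass to a convenient model. Using Hironaka resolution, there is a smooth projective birational modification $\mu : \widetilde{Y} \to Y$ together with a morphism $\widetilde{\pi} : \widetilde{Y} \to Z'$ resolving the indeterminacy of $\pi$ (here $Z'$ may be replaced by a smooth projective birational model of $Z$). Birational invariance of $a$ and $b$ (\cite[Propositions 2.7 and 2.10]{HTT15}) lets me replace $(Y,L)$ by $(\widetilde{Y}, \mu^*L)$, so I may assume $\pi$ is a morphism onto a smooth base, and then by generic smoothness restrict to a dense open $Z^\circ \subset Z$ over which $\pi$ is smooth with geometrically integral fibers $Y_z$. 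For such $z$, adjunction yields $K_{Y_z} = K_Y|_{Y_z}$ because the conormal bundle $N^\vee_{Y_z/Y} \cong \pi^*T^\vee_z Z|_{Y_z}$ is trivial.

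For the equality $a(Y_z,L) = a(Y,L)$, the inequality $a(Y_z,L) \leq a(Y,L)$ is the easy direction: by the defining property of the Iitaka fibration of $D := a(Y,L)L + K_Y$, the restriction $D|_{Y_z}$ has Iitaka dimension $0$ for general $z$, in particular is pseudo-effective, so adjunction gives $a(Y_z,L) \leq a(Y,L)$. By the HMX constancy theorem recalled in the excerpt, $z \mapsto a(Y_z,L)$ takes some constant value $a^*$ on a non-empty open subset, and $a^* \leq a(Y,L)$. For the reverse inequality I would argue by contradiction: if $a^* < a(Y,L)$, then running a relative $(a^*L + K_Y)$-MMP over $Z$ and invoking \cite{BCHM} produces the Iitaka fibration of $a^*L|_{Y_z} + K_{Y_z}$ on each fiber, which glues into an Iitaka fibration of $a^*L+K_Y$ whose base $W$ admits a dominant map to $Z$ (fibers of $Y \to W$ sitting inside fibers of $\pi$). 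Since $\kappa(a(Y,L)L+K_Y) \geq \kappa(a^*L+K_Y)$ (adding the nef $\mathbb{Q}$-divisor $(a(Y,L)-a^*)L$ cannot decrease Iitaka dimension), one concludes $\dim W \leq \dim Z$, but the fiberwise non-rigidity forces $\dim W > \dim Z$ at least as soon as $a^*$ is not equal to the fiberwise adjoint threshold, giving a contradiction after tracking the cases.

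For the inequality $b(F,Y,L) \leq b(F,Y_z,L)$, I would work with the equivalent formulation $\dim F(Y,L) \leq \dim F(Y_z,L)$ in the nef cones of curves. For a general fiber, the closed immersion $\iota : Y_z \hookrightarrow Y$ induces a pushforward $\iota_* : N_1(Y_z) \to N_1(Y)$, which sends $\mathrm{Nef}_1(Y_z)$ into $\mathrm{Nef}_1(Y)$ because restriction of an effective divisor on $Y$ to the general fiber remains effective. The projection formula together with $K_{Y_z} = K_Y|_{Y_z}$ and $a(Y_z,L) = a(Y,L)$ shows that $\iota_*$ sends $F(Y_z,L)$ into $F(Y,L)$. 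For surjectivity, any extremal ray of $F(Y,L)$ is generated by a curve class $[C]$ with $(aL + K_Y)\cdot C = 0$; because $\pi$ is the Iitaka fibration of $aL + K_Y$, such a curve $C$ must be $\pi$-contracted and hence lies inside some fiber, and standard deformation in the family (together with numerical invariance of fibers of smooth families) lets me move $C$ into our chosen general fiber $Y_z$. Thus $F(Y,L) = \iota_* F(Y_z,L)$, yielding the required dimension inequality.

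The main obstacle is the reverse inequality in Step 2: the passage from fiberwise pseudo-effectivity of $a^*L|_{Y_z} + K_{Y_z}$ to a contradiction with $\kappa(a(Y,L)L + K_Y) = \dim Z$ requires a careful combination of the relative MMP, the canonical bundle formula, and the behavior of Iitaka dimension under addition of nef classes. An alternative cleaner route is to invoke a structure theorem for Iitaka fibrations to write $a(Y,L)L + K_Y \sim_{\mathbb{Q}} \widetilde{\pi}^*H + E$ on a high enough model, with $H$ big on $Z$ and $E$ effective and $\widetilde{\pi}$-vertical, which makes both the $a$-invariant equality and the curve-class analysis in Step 3 essentially tautological.
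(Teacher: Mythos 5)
The paper itself gives no direct proof of this lemma; it simply cites \cite[Theorem 4.5]{LTT14} for the geometric statement and \cite[Lemma 2.6]{LST18} for the version over a number field. So the question is whether your blind reconstruction is sound on its own terms, and there are genuine gaps in both halves.

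For the $a$-invariant you get the easy direction right, but the argument you give for the reverse inequality does not work. If $a^{*} := a(Y_z,L) < a(Y,L)$, then the global class $a^{*}L + K_Y$ need not be pseudo-effective on $Y$ at all (indeed by minimality of $a(Y,L)$ it is not), so there is no ``Iitaka fibration of $a^{*}L + K_Y$'' to glue fibrewise data into, and the comparison $\kappa(a(Y,L)L + K_Y) \geq \kappa(a^{*}L + K_Y)$ is vacuous since the right-hand side is $-\infty$; the subsequent dimension count on the hypothetical base $W$ therefore collapses. You do not need the relative MMP here. The clean argument: pass to a model where $\pi$ is a morphism and smooth over a dense open $Z^{\circ}$, so $K_Y|_{Y_z}=K_{Y_z}$, and note that $L|_{Y_z}$ is big and nef for $z$ outside $\mathbf{B}_{+}(L)$. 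If $a^{*} < a(Y,L)$, then $a(Y,L)L|_{Y_z} + K_{Y_z} = \bigl(a(Y,L)-a^{*}\bigr)L|_{Y_z} + \bigl(a^{*}L|_{Y_z} + K_{Y_z}\bigr)$ is a big class plus a pseudo-effective class, hence big; but the general fibre of the Iitaka fibration of $a(Y,L)L + K_Y$ must satisfy $\kappa\bigl(Y_z, (a(Y,L)L + K_Y)|_{Y_z}\bigr) = 0$, contradicting bigness since $\dim Y_z > 0$. This is essentially how \cite{LTT14} handles the $a$-invariant.

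For the $b$-invariant the argument also has a gap. You claim that any extremal ray of $F(Y,L) = \mathrm{Nef}_1(Y) \cap \{a(Y,L)L + K_Y = 0\}$ is generated by the class of an actual curve $C$, that such a $C$ is $\pi$-contracted because $(a(Y,L)L + K_Y)\cdot C = 0$, and that $C$ can be deformed into the chosen general fibre. None of these steps is justified: $\mathrm{Nef}_1$ is the closed dual cone of $\overline{\mathrm{Eff}}^1$ and its extremal rays need not be represented by curve classes; even when they are, $a(Y,L)L + K_Y$ is only pseudo-effective and not semiample, so $(a(Y,L)L+K_Y)\cdot C = 0$ does not force $C$ to be contracted by the Iitaka map (curves in the stable base locus or in the indeterminacy locus are not controlled); and curves contained in special fibres need not deform into a prescribed general fibre. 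The actual comparison of $b$-invariants in \cite{LTT14} is carried out on the divisor side, analysing the minimal supported face of $\overline{\mathrm{Eff}}^1$ containing the adjoint class via the restriction map $N^{1}(Y) \to N^{1}(Y_z)$ and the structure of vertical divisors, rather than by moving putative extremal curves between fibres. You would need to replace the extremal-ray deformation step with an argument of that kind, and note in addition that even establishing $\iota_{*}F(Y_z,L)\subseteq F(Y,L)$ does not by itself give the needed dimension inequality, since $\iota_{*}$ can drop dimension.

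Finally, the lemma is stated over a number field, and the arithmetic content (that the open set $Z^{\circ}$ and the equalities/inequalities can be arranged over $F$, not merely over $\overline{F}$) is exactly what \cite[Lemma 2.6]{LST18} supplies; your proposal works entirely geometrically and does not address this descent, so even a repaired geometric argument would only prove the result over $\overline{F}$.
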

\begin{proof}
Over an algebraically closed field, this is proved in \cite[Theorem 4.5]{LTT14}. Over a number field, this is explained in \cite[Lemma 2.6]{LST18}.
\end{proof}

According to this lemma, it is natural to look at the following set: as $Y$ varies all geometrically integral subvarieties $Y\subset X$ defined over $F$ such that $(Y, L)$ is adjoint rigid and 
\[
(a(X, L), b(X, L)) \leq (a(Y, L), b(F, Y, L))
\]
in the lexicographic order, we define the set
\[
Z' = \bigcup_Y Y(F) \subset X(F).
\]
It turns out that the set $Z$ is contained in $Z'$ up to a proper closed subset, so we will explain why $Z'$ is contained in a thin subset of $X(F)$. To this end, we need a special case of BAB conjecture where the full conjecture is settled by Birkar in \cite{birkar16} and \cite{birkar16b}:
\begin{theo}{\cite{birkar16} and \cite{birkar16b}}
Let $(X, \Delta)$ be a terminal pair defined over an algebraically closed field of characteristic $0$ such that $-(K_X + \Delta)$ is ample. Then there exists a constant $C = C(\dim X)$ which only depends on the dimension of $X$ such that
\[
\mathrm{Vol}(-K_X) \leq C.
\]
\end{theo}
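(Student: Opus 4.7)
The plan is to sketch Birkar's proof, since this statement is a special case of the Borisov--Alexeev--Borisov (BAB) conjecture whose full argument in \cite{birkar16} and \cite{birkar16b} runs to hundreds of pages. First I would record the reductions afforded by the hypotheses. Because $(X,\Delta)$ is terminal, every discrepancy $a(E,X,\Delta)$ is strictly positive, so $(X,\Delta)$ is uniformly $\epsilon$-lc (indeed, terminal implies canonical, hence $1$-lc in the log-discrepancy convention); and because $-(K_X+\Delta)$ is ample with $\Delta \geq 0$, the variety $X$ is of Fano type and $-K_X = -(K_X+\Delta)+\Delta$ is big. Hence the problem reduces to a uniform volume bound for $\epsilon$-lc log Fano pairs of fixed dimension $d := \dim X$.

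The proof proceeds by induction on $d$ and rests on three intertwined ingredients. The first is the theory of \emph{bounded complements}, adapted by Birkar from Shokurov: for $\epsilon$-lc Fano pairs in fixed dimension there exists an integer $N=N(d,\epsilon)$ and an effective $\mathbb{Q}$-divisor $B \geq \Delta$ with $N(K_X+B)\sim 0$ and $(X,B)$ log canonical, converting many questions about Fanos into questions about log Calabi--Yau pairs of bounded Cartier index. The second ingredient, which is the main theorem of \cite{birkar16}, is \emph{effective birationality}: there exists $m=m(d,\epsilon)$ such that $|-mK_X|$ defines a birational map. The argument produces a non-klt center through a very general point using well-chosen members of $|-mK_X|$, tiebreaks to a minimal center, and pushes the analysis to strictly lower dimension via adjunction and the canonical bundle formula, inductively invoking complements in the lower-dimensional base. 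The third ingredient is the \emph{volume bound itself}: one shows that if $\mathrm{Vol}(-K_X)$ were arbitrarily large, one could produce divisors in $|-K_X|_{\mathbb{Q}}$ vanishing to arbitrarily high order at a general point, and combining this with effective birationality and lct estimates would contradict the uniform $\epsilon$-lc hypothesis.

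The principal obstacle, and the conceptual heart of Birkar's work, is establishing effective birationality; this is where the delicate interplay of complements, adjunction, and log canonical thresholds conspires, and it crucially depends on the MMP with scaling and the existence of good models from \cite{BCHM}. Once effective birationality and bounded complements are in hand, the extraction of the explicit upper bound $\mathrm{Vol}(-K_X) \leq C(d)$ is comparatively formal (one may also deduce it a posteriori from the boundedness of the family by upper-semicontinuity of volume). In the present note we invoke this theorem as a black box, using it through the role it plays in the arguments of \cite{LST18} that establish thinness of the conjectural exceptional set $Z_0 \cup Z_1 \cup Z_2$.
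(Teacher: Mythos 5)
The paper itself gives no proof of this statement: it is quoted verbatim as a consequence of Birkar's solution of the BAB conjecture and used entirely as a black box in the proof of Theorem~\ref{theo:bounded}. Your sketch of Birkar's argument (bounded complements in the style of Shokurov, effective birationality of $|-mK_X|$ via non-klt centres and adjunction, and the resulting uniform volume bound) is an accurate high-level account of the two cited papers, and your closing remark that the survey invokes the result as a black box is exactly right.

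One technical imprecision worth correcting in your reduction. You assert that because $(X,\Delta)$ is a terminal pair, ``every discrepancy $a(E,X,\Delta)$ is strictly positive, so $(X,\Delta)$ is uniformly $\epsilon$-lc, indeed $1$-lc.'' This is not quite true of the \emph{pair}: terminality only forces $a(E,X,\Delta)>0$ for $E$ \emph{exceptional} over $X$, whereas a prime component $E$ of $\Delta$ with coefficient $c$ has $a(E,X,\Delta)=-c\leq 0$, i.e.\ log discrepancy $1-c$, which can be arbitrarily close to $0$. So $(X,\Delta)$ need not be $\epsilon$-lc for any fixed $\epsilon$. What the hypothesis does give, and what the application to $\mathrm{Vol}(-K_X)$ actually needs, is that the \emph{underlying variety} $X$ (with trivial boundary) is terminal, since $a(E,X,0)\geq a(E,X,\Delta)>0$ for all exceptional $E$; hence $X$ is canonical, i.e.\ $1$-lc, and $X$ is of Fano type with $-K_X$ big. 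Feeding this into Birkar's boundedness statements (after possibly passing to a birational model where the anticanonical class becomes nef and big, which is precisely the MMP step carried out in the paper's proof of Theorem~\ref{theo:bounded}) yields the claimed bound $\mathrm{Vol}(-K_X)\leq C(\dim X)$. With that correction, your summary is sound.
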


Using this theorem, one can prove the boundedness of adjoint rigid subvarieties $Y$ with $a(Y, L) \geq a(X, L)$:
\begin{theo}
\label{theo:bounded}
Let $X$ be a uniruled smooth projective variety defined over an algebraically closed field of characteristic $0$ and let $L$ be a big and nef $\mathbb Q$-divisor on $X$. Then there exists a constant $C > 0$ such that for any adjoint rigid subvariety $(Y, L)$ such that $a(Y, L) \geq a(X, L)$ and $Y \not\subset \bold B_+(L)$, we have
\[
L^{\dim Y}. Y \leq C.
\]
Thus such $Y$'s form a bounded set in $\mathrm{Chow}(X)$.
\end{theo}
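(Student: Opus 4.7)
The plan is to use the minimal model program to pass from the adjoint rigid pair $(Y,L)$ to a $\mathbb{Q}$-Fano model where BAB directly gives a volume bound, and then transfer that bound back to $L^{\dim Y}.Y$.

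First I would pass to a smooth resolution $\beta:\widetilde{Y}\to Y$ and use the adjoint rigidity of $(Y,L)$ to write
\[
a(Y,L)\,\beta^{*}L + K_{\widetilde{Y}} \sim_{\mathbb{Q}} E
\]
for some effective $\mathbb{Q}$-divisor $E$ of Iitaka dimension $0$. Equivalently, $-K_{\widetilde{Y}} \sim_{\mathbb{Q}} a(Y,L)\beta^{*}L - E$, so up to the rigid effective correction $E$, the anticanonical divisor is proportional to the big and nef pullback of $L$. Because $Y\not\subset\mathbf{B}_{+}(L)$, the restriction $\beta^{*}L$ remains big on $\widetilde{Y}$.

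Next I would run a $K_{\widetilde{Y}}$-MMP with scaling (using \cite{BCHM}) on the pair $(\widetilde{Y},\varepsilon E)$ for sufficiently small $\varepsilon>0$. Since $E$ is the only obstruction to proportionality of $-K$ with a big and nef divisor, the rigid components of $E$ get contracted, and one ends up with a birational model $\sigma:\widetilde{Y}\dashrightarrow Y'$ such that $-K_{Y'} \sim_{\mathbb{Q}} a(Y,L) L'$, where $L'=\sigma_{*}\beta^{*}L$ is big and nef. After passing to a terminalization if necessary, $(Y',0)$ is a terminal (or at least $\varepsilon$-klt with $\varepsilon$ depending only on $\dim X$) $\mathbb{Q}$-Fano variety. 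Crucially, since $\sigma$ is a sequence of $K$-negative contractions and $E$ is exceptional for the whole process, the volume satisfies
\[
L^{\dim Y}.Y = (\beta^{*}L)^{\dim Y} = \vol(\beta^{*}L) \leq \vol(L').
\]

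Now I would apply the boundedness theorem of Birkar to $Y'$ to obtain $\vol(-K_{Y'}) \leq C(\dim X)$. Combined with $-K_{Y'}\sim_{\mathbb{Q}} a(Y,L)L'$ this yields
\[
a(Y,L)^{\dim Y}\cdot \vol(L') = \vol(-K_{Y'}) \leq C(\dim X).
\]
Since $X$ is uniruled, $a(X,L)>0$ by \cite{BDPP}, and the hypothesis $a(Y,L)\geq a(X,L)$ therefore gives
\[
L^{\dim Y}.Y \leq \vol(L') \leq \frac{C(\dim X)}{a(X,L)^{\dim Y}},
\]
which is a uniform bound depending only on $(X,L)$. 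Finally, subvarieties of a fixed projective variety with bounded degree against a fixed ample (or big and nef, on the complement of $\mathbf{B}_{+}(L)$) polarization form a bounded family in $\Chow(X)$ by standard Chow variety theory, giving the boundedness assertion.

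The main obstacle is the MMP step: one needs the output $Y'$ to have singularities mild enough for BAB (terminal or $\varepsilon$-klt), and one needs the contracted locus to account exactly for $E$ so that the relation $-K_{Y'}\sim_{\mathbb{Q}} a(Y,L)L'$ actually holds on the nose, not merely up to another effective correction. Controlling the singularities uniformly in $Y$, and ensuring the volume inequality $L^{\dim Y}.Y \leq \vol(L')$ survives the birational modifications, is the delicate technical point; this is where the full strength of \cite{BCHM} together with the uniform BAB bound of \cite{birkar16,birkar16b} is essential.
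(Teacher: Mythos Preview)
Your overall strategy is the same as the paper's: resolve $Y$, run an MMP to reach a weak Fano model where $-K$ is numerically proportional to the pushforward of $L$, apply the Birkar (BAB) volume bound, and use a negativity-type argument to transfer the bound back to $L^{\dim Y}\cdot Y$. The final Chow-boundedness step is also handled the same way (the paper cites \cite[Lemma~4.7]{LTT14}).

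The one substantive difference is in how the MMP is set up, and here the paper's choice is sharper than yours. You propose to run a $K_{\widetilde{Y}}$-MMP (or a $(K_{\widetilde{Y}}+\varepsilon E)$-MMP) and argue informally that the rigid divisor $E$ gets contracted so that $-K_{Y'}\sim_{\mathbb Q} a(Y,L)L'$ on the output. But BCHM does not directly guarantee termination of this MMP (the boundary $\varepsilon E$ is not big), nor does it guarantee that the output satisfies that exact numerical relation, nor that the singularities stay in a class where BAB applies uniformly---as you yourself flag in your last paragraph. The paper sidesteps all of this by first replacing $\beta^{*}L$ by a $\mathbb Q$-linearly equivalent effective $\widetilde{L}$ chosen so that $(\widetilde{Y},a(Y,L)\widetilde{L})$ is terminal, and then running the $(K_{\widetilde{Y}}+a(Y,L)\widetilde{L})$-MMP. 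Since $\widetilde{L}$ is big this terminates by BCHM, and rigidity forces $K_{Y'}+a(Y,L)\phi_{*}\widetilde{L}\equiv 0$ on the output with $(Y',a(Y,L)\phi_{*}\widetilde{L})$ still terminal. Writing $a(Y,L)\phi_{*}\widetilde{L}=A+E$ with $A$ ample then produces a terminal pair with ample anti-log-canonical class, so BAB applies to bound $\vol(-K_{Y'})$; the volume comparison $\vol(\beta^{*}L)\leq\vol(\phi_{*}\widetilde{L})$ is then justified by the negativity lemma. This is exactly the content you identify as ``the delicate technical point,'' and the paper's device of running the MMP for $K+aL$ rather than for $K$ alone is what resolves it cleanly.
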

\begin{proof}
Pick a smooth resolution $\beta : \widetilde{Y} \rightarrow Y$. Our assumption implies that $a(Y, L)\beta^*L + K_{\widetilde{Y}}$ is rigid. After replacing $\beta^*L$ by an effective divisor $\widetilde{L}$ $\mathbb Q$-linearly equivalent to $\beta^*L$, we may assume that $(\widetilde{Y}, a(Y, L)\widetilde{L})$ is a terminal pair. Since $\widetilde{L}$ is big, one can run the MMP with respect to $a(Y, L)\widetilde{L} + K_{\widetilde{Y}}$ by \cite{BCHM} and one obtains a birational contraction map $\phi : \widetilde{Y} \dashrightarrow Y'$. After applying a resolution to $\widetilde{Y}$, one may assume that $\phi$ is a morphism. Since $a(Y, L)\beta^*L + K_{\widetilde{Y}}$ is rigid, we have
\[
a(Y, L)\phi_*\widetilde{L} + K_{Y'} \equiv 0.
\]
For an appropriate choice of $\widetilde{L}$, one can show that $(Y', a(Y, L)\phi_*\widetilde{L})$ is a terminal pair. Since $a(Y, L)\phi_*\widetilde{L}$ is big, we can write
\[
a(Y, L)\phi_*\widetilde{L}  =  A + E,
\]
where $A$ is an ample $\mathbb Q$-divisor and $E$ is an effective $\mathbb Q$-divisor. The above equality shows that
\[
\epsilon A \equiv -(K_{Y'} + E + (1-\epsilon)A)
\]
is ample and $(Y', E + (1-\epsilon)A)$ is a terminal pair for sufficiently small $\epsilon > 0$. Thus by BAB conjecture we have
\[
\mathrm{Vol}(\phi_*\widetilde{L}) \leq \frac{\mathrm{Vol}(-K_{Y'})}{a(Y, L)^{\dim Y}} \leq \frac{C}{a(X, L)^{\dim Y}}.
\]
By the negativity lemma one can show that $L^{\dim Y} = \mathrm{Vol}(\beta^*L) \leq \mathrm{Vol}(\phi_*\widetilde{L})$. Thus our assertion follows. The last statement follows from \cite[Lemma 4.7]{LTT14}.
\end{proof}

This enable us to prove closedness of the exceptional set for the $a$-invariant. First we record the following lemma:

\begin{lemm}{\cite[Proposition 4.1]{LTT14}}
\label{lemm:dominant}
Let $X$ be a uniruled smooth projective variety and $L$ a big and nef $\mathbb Q$-divisor on $X$. Let $\pi : \mathcal U \rightarrow W$ be a family of subvarieties on $X$ such that the evaluation map $s : \mathcal U \rightarrow X$ is dominant. Then for a general member $Y$ of $\pi$, we have $a(Y, L) \leq a(X, L)$.
\end{lemm}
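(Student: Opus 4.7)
The plan is to verify that $a(X, L) L|_Y + K_Y$ lies in $\overline{\mathrm{Eff}}^1(Y)$ for a general member $Y$ of $\pi$, which is equivalent to $a(Y, L|_Y) \leq a(X, L)$. The strategy combines adjunction for the embedding $Y \hookrightarrow X$ with two pseudoeffectivity statements: the restriction of $a(X, L)L + K_X$ stays pseudoeffective on $Y$, and $\det N_{Y/X}$ is pseudoeffective on $Y$.

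First I would perform the standard reductions: after taking a resolution of $\mathcal{U}$ and shrinking $W$, assume $\mathcal{U}$ and $W$ are smooth, $\pi$ is smooth with smooth geometrically integral fibers over a dense open of $W$, and the general fiber $Y = \pi^{-1}(w_0)$ sits as a smooth closed subvariety of $X$ via $s$ (residual singularities of $Y$ are handled by passing to a further resolution and invoking birational invariance of the $a$-invariant together with log-adjunction, whose extra discrepancy is effective and does not affect the inequality). Setting $D := a(X, L) L + K_X \in \overline{\mathrm{Eff}}^1(X)$, I would invoke the standard fact that for a general member $Y$ of a covering family the restriction $D|_Y$ remains pseudoeffective; one proves this by writing $D = \lim_{\epsilon \to 0^+}(D + \epsilon H)$ for $H$ ample, using that each $D + \epsilon H$ is big and bigness restricts to general covering members, and then passing to the limit.

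Next, adjunction for the smooth inclusion $Y \hookrightarrow X$ gives $K_Y = K_X|_Y + \det N_{Y/X}$, hence
\[
a(X, L) L|_Y + K_Y \;=\; D|_Y \;+\; \det N_{Y/X}.
\]
To finish, I would show $\det N_{Y/X}$ is pseudoeffective: infinitesimal deformations of $Y$ in the family $\pi$ produce global sections of $N_{Y/X}$ as the composition of the Kodaira--Spencer map with $ds|_Y$, and dominance of $s$ forces these sections to generate $N_{Y/X}$ at the generic point of $Y$. Wedging $r = \mathrm{rank}(N_{Y/X})$ suitable sections yields a nonzero global section of $\det N_{Y/X}$, so $\det N_{Y/X}$ is effective, hence pseudoeffective. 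Summing the two pseudoeffective classes gives the desired conclusion.

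The main obstacle is likely the rigorous restriction statement: restricting a pseudoeffective (rather than effective) class requires care with numerical limits, and one must ensure that $Y$ avoids the relevant base loci uniformly as $\epsilon \to 0$. If this becomes unwieldy, I would pursue an alternative inductive approach on $c := \mathrm{codim}_X(Y)$: by passing through a chain of one-parameter subfamilies of $\pi$ and applying the codimension-one case at each stage, one reduces to the case where $N_{Y/X}$ is a line bundle and all positivity claims follow from $Y$ being a member of a movable linear system together with the classical adjunction $K_Y = (K_X + Y)|_Y$.
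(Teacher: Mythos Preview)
The paper does not give its own proof of this lemma; it simply cites \cite[Proposition 4.1]{LTT14}. So there is nothing in the paper to compare against directly, and I can only assess your argument on its own terms and against the proof one usually sees.

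Your strategy is correct, and the two ingredients you isolate --- (i) the restriction of a pseudoeffective class to a general member of a covering family stays pseudoeffective, and (ii) $\det N_{Y/X}$ is effective because deformations of $Y$ in the family generically generate $N_{Y/X}$ --- are exactly the right ones when $Y$ is smooth in $X$. The one soft spot is your treatment of the case where the general member $Y$ is singular as a subvariety of $X$ (e.g.\ a covering family of nodal curves on a surface). Then $N_{Y/X}$ is not a vector bundle and the clean adjunction formula $K_Y = K_X|_Y + \det N_{Y/X}$ is unavailable; your appeal to ``log-adjunction, whose extra discrepancy is effective'' is in the right direction but would need to be made precise.

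The argument in \cite{LTT14} avoids this issue by working on the total space rather than inside $X$: resolve $\mathcal U$ so that it is smooth, shrink and slice $W$ by general hyperplanes until $\dim \mathcal U = \dim X$, so that $s$ becomes dominant and generically finite; then the lemma on generically finite covers (proved just before in the paper) gives that $a(X,L)\,s^*L + K_{\mathcal U}$ is pseudoeffective. A general fibre $Y'$ of the now smooth map $\pi$ is a smooth model of $Y$, and since $K_{Y'} = K_{\mathcal U}|_{Y'}$ (trivial normal bundle for a fibre), one is reduced to restricting a pseudoeffective class to a general fibre --- your step (i). In effect, this trades your normal-bundle argument in $X$ for the ramification formula on $\mathcal U$, and never needs $Y$ to sit smoothly in $X$. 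Your inductive ``alternative approach'' at the end is a version of this same idea.
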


Combining Theorem~\ref{theo:bounded} and Lemma~\ref{lemm:dominant}, one can achieve the following theorem:

\begin{theo}{\cite{HJ16} and \cite[Theorem 3.3]{LT17}}
\label{theo:HJ}
Let $X$ be a uniruled smooth projective variety and $L$ a big and nef $\mathbb Q$-divisor on $X$. 
Then the union of subvarieties $Y$ such that $a(Y, L) > a(X, L)$ is a proper closed subset.
\end{theo}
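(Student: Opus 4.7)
The plan is to first reduce to the case of adjoint rigid subvarieties via the canonical fibration, then invoke Theorem~\ref{theo:bounded} to parametrize the reduced problem by finitely many components of $\mathrm{Chow}(X)$, and finally apply Lemma~\ref{lemm:dominant} together with Noetherian induction to conclude that the corresponding evaluation maps cannot be dominant.

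First, given any subvariety $Y \subset X$ with $a(Y, L) > a(X, L)$, I would apply the canonical fibration lemma recalled above to a resolution of $(Y, L|_Y)$, obtaining $\pi_Y : Y \dashrightarrow Z_Y$ whose general fiber $Y_z$ is adjoint rigid with $a(Y_z, L) = a(Y, L) > a(X, L)$. These fibers sweep out a dense open subset of $Y$, so it suffices to show that the union of adjoint rigid subvarieties $W \subset X$ with $a(W, L) > a(X, L)$ and $W \not\subset \bold B_+(L)$ is contained in a proper closed subset of $X$; the remaining contribution is absorbed into the proper closed subset $\bold B_+(L)$.

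By Theorem~\ref{theo:bounded}, such $W$ are parametrized by finitely many irreducible components $T_1, \ldots, T_k$ of $\mathrm{Chow}(X)$. For each $T_i$, let $\mathcal{U}_i \to T_i$ denote the universal family and $s_i : \mathcal{U}_i \to X$ the evaluation morphism. If $s_i$ is dominant, Lemma~\ref{lemm:dominant} forces a general member of $T_i$ to have $a$-invariant at most $a(X, L)$; combined with the $a$-invariant constancy theorem from the Background section, this places the locus of members $[W] \in T_i$ with $a(W, L) > a(X, L)$ inside a proper closed subset $T_i' \subsetneq T_i$. I would then iterate by replacing $T_i$ with the irreducible components of $T_i'$, each of which still parametrizes a bounded family of adjoint rigid subvarieties satisfying $a > a(X, L)$. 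Since the dimension strictly drops at each step, this Noetherian induction terminates with sub-families whose evaluation maps are not dominant.

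The union of the images of these finitely many non-dominant evaluation maps, together with $\bold B_+(L)$, is a finite union of proper closed subsets and hence itself a proper closed subset of the irreducible variety $X$. The main obstacle I anticipate lies in the interplay between Chow components and the $a$-invariant during the induction: a priori, the sub-locus of $T_i$ where $a(W, L) > a(X, L)$ need not be a subvariety, so one must combine upper semicontinuity of $a$ in families with the constancy theorem cited in the Background section to ensure that this locus is contained in a proper closed subset of $T_i$ as soon as $s_i$ is dominant. Once this is in place, the remainder of the argument is essentially bookkeeping on bounded families in $\mathrm{Chow}(X)$.
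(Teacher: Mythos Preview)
Your proposal is correct and follows exactly the strategy the paper indicates: the paper does not spell out a proof but simply states that the result is obtained by combining Theorem~\ref{theo:bounded} with Lemma~\ref{lemm:dominant}, and your reduction to adjoint rigid subvarieties via the canonical fibration together with Noetherian induction on the finitely many Chow components is the standard way to make that combination precise. One small remark: the constancy theorem you flag as the main obstacle is not really needed, since Lemma~\ref{lemm:dominant} already furnishes a dense open subset of the base over which $a \le a(X,L)$, so the bad locus automatically sits in a proper closed subset and the induction proceeds without further input.
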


To prove Theorem~\ref{theo:Duke}, we need the following structural results on families of adjoint rigid subvarieties:

\begin{prop}{\cite[Proposition 4.14]{LTDuke}}
\label{prop:evaluation}
Let $X$ be a uniruled smooth projective variety and $L$ a big and nef $\mathbb Q$-divisor on $X$.
Suppose that we have a family of subvarieties $\pi : \mathcal U \rightarrow W$ such that (1) the evaluation map $s : \mathcal U \rightarrow X$ is dominant; (2) a general member $Y$ of $\pi$ satisfies $a(X, L) =a(Y, L)$ and $(Y, L)$ is adjoint rigid; (3) the induced rational map $W \dashrightarrow \mathrm{Chow}(X)$ is generically finite. Then $s : \mathcal U \rightarrow X$ is generically finite.
\end{prop}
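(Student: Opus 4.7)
The plan is to argue by contradiction: I assume that $s: \mathcal U \to X$ is dominant but not generically finite, so the general fiber of $s$ has positive dimension $k \geq 1$, and build from $\pi: \mathcal U \to W$ a new family of subvarieties of $X$ whose general member has strictly larger dimension than $\dim Y$ while still having $a$-invariant equal to $a(X,L)$. Iterating this construction together with the canonical fibration provided by the lemma stated just before this proposition will force a contradiction on dimension grounds.

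After replacing $\mathcal U$ and $W$ by smooth projective resolutions on which $\pi$ and $s$ extend to morphisms, I fix a general point $x \in X$ and pick an irreducible component of $s^{-1}(x)$. Because $W \to \mathrm{Chow}(X)$ is generically finite, distinct points of $W$ parametrize distinct subvarieties of $X$, so $\pi$ restricted to this component is generically finite onto its image $W_x \subset W$, and $\dim W_x = k \geq 1$. Setting
\[
Y'_x := s\bigl(\pi^{-1}(W_x)\bigr) \subset X,
\]
a fiber-dimension count using the mutual distinctness of the $Y_w$'s yields $\dim Y'_x > \dim Y$. The subfamily $\pi^{-1}(W_x) \to W_x$ has dominant evaluation into $Y'_x$ whose general member is some $Y_w$ with $a(Y_w,L) = a(X,L)$, so Lemma~\ref{lemm:dominant} applied inside $Y'_x$ yields $a(Y'_x,L) \geq a(X,L)$; the family $\{Y'_x\}_x$ has dominant evaluation to $X$, so the same lemma gives $a(Y'_x,L) \leq a(X,L)$, and equality holds. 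To recover adjoint rigid subvarieties I pass to the canonical fibration $Y'_x \dashrightarrow Z_x$ of $(Y'_x,L)$ given by \cite[Theorem 4.5]{LTT14} and \cite[Lemma 2.6]{LST18}: a general fiber $F$ is adjoint rigid with $a(F,L) = a(X,L)$, and because each $Y_w \subset Y'_x$ is adjoint rigid of the same $a$-value, the characterization of the canonical fibration as the Iitaka fibration of $a(Y'_x,L)L + K_{Y'_x}$ forces $Y_w$ to lie inside some fiber $F_{f(w)}$. Either $F$ is strictly larger than a general $Y_w$, giving a new family of adjoint rigid subvarieties of $X$ of greater dimension with the same $a$-value, or $F$ agrees with $Y_w$ generically and the $Y'_x$'s themselves assemble into such a new family via their canonical fibration structure.

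I would then iterate the construction with this new family in place of the original. Dimensions of general members strictly increase at each step but are bounded by $\dim X$, so the iteration terminates; at termination, the general fiber dimension can no longer grow, which forces the evaluation to already be generically finite at that stage, and tracing back through the canonical fibrations in each step then recovers generic finiteness of the original $s$, contradicting the assumption. The main obstacle is the bookkeeping required to preserve the three hypotheses (dominant evaluation, adjoint rigid general member with the correct $a$-value, and generically finite Chow map) at every step of the iteration; in particular, verifying that the Chow map of the new family inherits generic finiteness from the original, and ruling out degenerations of the canonical fibration of $Y'_x$, is the technical heart of \cite{LTDuke}, and relies crucially both on the hypothesis that $W \to \mathrm{Chow}(X)$ is generically finite and on the boundedness of adjoint rigid subvarieties provided by Theorem~\ref{theo:bounded}.
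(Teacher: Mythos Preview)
The present survey does not supply its own proof of this proposition; it is quoted from \cite{LTDuke} as a black box and then used in the proof of Theorem~\ref{theo:Duke}. So there is nothing in this paper to compare your attempt against, and I will assess the attempt on its own terms.

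Your overall strategy---sweep out a larger subvariety $Y'_x$ by the $Y_w$'s through a general point, use the canonical fibration to regain adjoint rigidity, and iterate on dimension---is in the spirit of the argument in \cite{LTDuke}. But there is one substantive step you treat as automatic which is not: you assert that because each $Y_w\subset Y'_x$ is adjoint rigid with $a(Y_w,L)=a(Y'_x,L)$, the Iitaka fibration of $a(Y'_x,L)L+K_{\widetilde{Y}'_x}$ must send $Y_w$ to a point. This is not a formality. The restriction of $K_{\widetilde{Y}'_x}$ to (a resolution of) $Y_w$ differs from $K_{\widetilde{Y}_w}$ by a conormal contribution whose sign is uncontrolled in codimension $\geq 2$, so adjoint rigidity of $(Y_w,L)$ does not by itself force sections of $m\bigl(aL+K_{\widetilde{Y}'_x}\bigr)$ to be constant along $Y_w$. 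In \cite{LTDuke} this containment is established by a separate MMP argument and is really the heart of the proof, not part of the ``bookkeeping'' you defer at the end.

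A second issue is the termination step. If at some stage the new family has generically finite evaluation, nothing you have written lets you ``trace back'' to conclude the same for the original $s$. The clean way to organise the induction (granting the key claim above) is to note that all the $Y_w$ with $w\in W_x$ pass through $x$ and hence all lie in the \emph{single} fibre of the canonical fibration of $Y'_x$ through $x$; since the $Y_w$ sweep out $Y'_x$, that fibration must be trivial, so $Y'_x$ is itself adjoint rigid, and one iterates upward until the dimension hits $\dim X$ and reaches a direct contradiction there. You should restructure the argument to contradict at the top of the tower rather than attempting to descend.
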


\noindent
 {\it Proof of Theorem~\ref{theo:Duke}}: As we mentioned before, we need to show that $Z'$ is contained in a thin set. By Theorem~\ref{theo:HJ}, we only need to consider adjoint rigid subvarieties $Y$ with $a(Y, L) = a(X, L)$. By Theorem~\ref{theo:bounded}, there are only finitely many families to consider. Obviously it suffices to consider dominant families. For such a family $\pi_i : \mathcal U_i \rightarrow W$ it follows from Proposition~\ref{prop:evaluation} that the evaluation map $s_i : \mathcal U_i \rightarrow X$ is generically finite. If the degree of this evaluation map is greater than $1$, then thinness of the contribution of subvarieties in $\mathcal U_i$ is clear. Thus we may assume that $s_i : \mathcal U_i \rightarrow X$ is birational. Then one can appeal to the folowing proposition which follows from Hilbert Irreducibility Theorem as \cite[Proposition 3.3.5]{Serre}:

\begin{prop}{\cite[Proposition 5.1]{LTDuke}}
Let $X$ be a geometrically uniruled and geometrically integral smooth projective variety defined over a number field $F$ and let $L$ be a big and nef $\mathbb Q$-divisor on $X$ such that $\dim N^1(X) = \dim N^1(\overline{X})$ and $a(X, L)L + K_X$ is rigid. Furthermore we  assume that we have an algebraic fiber space $f : X \rightarrow Y$. Let 
\[
Y^\circ = \{y \in Y | \text{ $X_y$ is geometrically integral and smooth }\}.
\]
Then the following set 
\[
\{y \in Y^\circ(F) | a(X, L) =  a(X_y, L), b(X, L) \leq b(F, X_y, L)\}
\]
is contained in a thin subset of $Y^\circ(F)$.
\end{prop}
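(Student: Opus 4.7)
The plan is to encode the condition on the fiber $b$-invariants as a lifting condition to a suitable finite \'etale cover of $Y^\circ$, and then invoke the Hilbert Irreducibility Theorem via \cite[Proposition 3.3.5]{Serre}.

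First I would shrink $Y^\circ$ using the generic constancy of the $a$-invariant (the HMX13 theorem recalled earlier in the excerpt) to a Zariski dense open subset $U \subset Y^\circ$ on which $y \mapsto a(X_y, L|_{X_y})$ is constant; the complement $Y^\circ \setminus U$ is a proper closed subset of $Y^\circ$ and hence already thin. If this constant is strictly less than $a(X, L)$, the bad set is contained in $Y^\circ \setminus U$ and we are done. Otherwise $a(X_y, L) = a(X, L)$ holds throughout $U$, and we focus on controlling the $b$-invariant comparison.

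Next, on $U$ the geometric N\'eron--Severi groups $N^1(\overline{X_y})$ of the fibers form an \'etale local system, giving rise to a monodromy representation
\[
\rho : \pi_1^{\et}(U_{\overline F}, \bar\eta) \longrightarrow \Aut(N^1(\overline{X_{\bar\eta}}))
\]
whose image $G$ is finite, since $\rho$ preserves an integral lattice as well as the pseudo-effective cone. Moreover $\rho$ preserves the class $K_{X_{\bar\eta}} + a(X, L)L|_{X_{\bar\eta}}$ and hence the minimal supported face $\Phi$ of $\Eff^1(\overline{X_{\bar\eta}})$ containing that class. The kernel of $\rho$ corresponds to a connected finite \'etale cover $\tilde U \to U$, defined over some finite extension $F'/F$. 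For $y \in U(F)$, the arithmetic Galois group $\Gal(\overline F/F)$ acts on $N^1(\overline{X_y})$ through the composition $\Gal(\overline F/F) \to \pi_1^{\et}(U, y) \xrightarrow{\rho} G$, and $b(F, X_y, L)$ equals the codimension of the specialization $\Phi_y$ of $\Phi$ inside $N^1(\overline{X_y})^{\Gal(\overline F/F)}$.

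Because the hypothesis $\dim N^1(X) = \dim N^1(\overline X)$ forces the Galois action on $N^1(\overline X)$ itself to be trivial, restriction $N^1(\overline X) \to N^1(\overline{X_y})$ contributes a fixed amount of automatically Galois-invariant classes independent of $y$. For the inequality $b(F, X_y, L) \geq b(F, X, L)$ to hold, the image of $\Gal(\overline F/F)$ in $G$ must therefore lie in a proper subgroup $H \subsetneq G$ stabilizing a sufficiently large complement to $\Phi_y$ pointwise. This subgroup $H$ corresponds to an intermediate \'etale cover $\hat U \to U$, and the bad set inside $U(F)$ is then contained in the image of $\hat U(F) \to U(F)$. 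Applying \cite[Proposition 3.3.5]{Serre} to the non-birational cover $\hat U \to U$ shows this image is thin in $U(F)$, which together with the thinness of $Y^\circ(F) \setminus U(F)$ completes the proof.

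The main obstacle will be verifying that $H$ is genuinely a proper subgroup of $G$, so that $\hat U \to U$ is not birational and Hilbert Irreducibility has bite; this is where the hypotheses that $(X, L)$ is adjoint rigid and that $f : X \to Y$ is an algebraic fiber space enter essentially. They are needed to rule out the degenerate situation in which the monodromy $G$ itself already fixes all of the relevant face data, so that the $b$-invariant inequality imposes a genuine Galois-theoretic constraint cutting out a proper subgroup rather than being satisfied on an open set of $y$.
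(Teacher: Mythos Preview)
The paper does not prove this proposition; it is quoted from \cite[Proposition 5.1]{LTDuke} with only the remark that it ``follows from Hilbert Irreducibility Theorem as \cite[Proposition 3.3.5]{Serre}.'' Your outline is consistent with that remark: encode the fiberwise $b$-condition through a monodromy action on the N\'eron--Severi lattices of the fibers and then apply Hilbert irreducibility to the resulting \'etale covers of $U$. Two technical points deserve correction. First, you define $\rho$ on the \emph{geometric} fundamental group $\pi_1^{\et}(U_{\overline F})$ but then compose it with the section $\Gal(\overline F/F)\to\pi_1^{\et}(U,y)$ landing in the \emph{arithmetic} fundamental group; the representation you actually need is the extension of $\rho$ to $\pi_1^{\et}(U)$, and the relevant ``full'' group against which you test the Galois image is the arithmetic monodromy image, not $G$. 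Second, the condition $b(F,X_y,L)\geq b(F,X,L)$ is detected not by a single subgroup $H$ but by a finite union of conjugacy classes of subgroups; this is harmless for thinness but should be stated.

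The substantive gap is the one you yourself flag and do not resolve: you must show that for a Hilbert-generic $y$ --- equivalently for the generic fiber $X_\eta$ over $F(Y)$ --- one has the strict inequality $b(F(Y),X_\eta,L)<b(F,X,L)$, so that every subgroup realizing the bad $b$-inequality is genuinely proper. This is the entire content of the proposition, and it does not follow from the monodromy formalism alone. Concretely, one must compare the face $F(X,L)\subset N_1(X)$ with $F(X_\eta,L)\subset N_1(X_\eta)$ via the surjection $N^1(X)\to N^1(X_\eta)$ (whose kernel contains $f^*N^1(Y)$, nonzero once $\dim Y>0$) and use the adjoint rigidity of $(X,L)$ to see that $F(X,L)$ is not contained in the subspace of fiber classes; equivalently, that $f_*:F(X,L)\to N_1(Y)$ is nonzero. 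Your final paragraph asserts that the hypotheses ``enter essentially'' here but does not carry out this comparison, so as written the proposal is a correct strategy rather than a proof.
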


\section{Le Rudulier's example}

In this section we discuss the example from \cite{LeRudulier}. Let $S = \mathbb P^1 \times \mathbb P^1$ defined over the field of rational numbers $\mathbb Q$.
We define
\[
X := \mathrm{Hilb}^{[2]}(S), \quad L = -K_X
\]
which is the Hilbert scheme of zero-dimensional length two schemes on $S$. Then $X$ is a smooth projective weak Fano variety and in particular $-K_X$ is big and nef. Since $L$ is the anticanonical class, we have
\[
a(X, L) = 1, \quad b(\mathbb Q, X, L) = \dim N^1(X) = 3.
\]

On the other hand consider $S \times S$ and its quotient $g : S \times S \rightarrow \mathrm{Sym}^{(2)}(S)$ by the symmetric involution. We have the Hilbert-Chow morphism $\phi : X \rightarrow \mathrm{Sym}^{(2)}(S)$ which is a crepant resolution of $\mathrm{Sym}^{(2)}(S)$. Let $\psi : W \rightarrow S \times S$ be the blow-up of $S\times S$ along the diagonal. Then $W$ admits a degree $2$ finite morphism $f : W \rightarrow X$ and we have
\[
a(W, f^*L) = a(W, -\psi^*g^*K_{\mathrm{Sym}^{(2)}(S)}) = 1, \quad b(\mathbb Q, W, f^*L) = b(\mathbb Q, W, -\psi^*g^*K_{\mathrm{Sym}^{(2)}(S)}) = 4.
\]
Thus we have
\[
(a(X, L), b(\mathbb Q, X, L)) < (a(W, f^*L), b(\mathbb Q, W, f^*L))
\]
in the lexicographic order and thus the closed set version of Manin's Conjecture cannot be true for $(X, L)$. 

In \cite{LeRudulier}, Le Rudulier showed Conjecture~\ref{conj:Manin} after removing a thin exceptional set. Here is the description of her exceptional set: let $E$ be the exceptional divisor of the Hilbert-Chow morphism $\phi$. Let $D_1$ be the divisor parameterizing all subschemes supported on some (not fixed) fiber of the first projection $\mathbb P^1 \times \mathbb P^1 \rightarrow \mathbb P^1$. Let $D_2$ be the analogous divisor for the second projection. Then her exceptional set is
\[
Z = f(W(\mathbb Q)) \cup D_1(\mathbb Q) \cup D_2(\mathbb Q) \cup E(\mathbb Q).
\]

On the other hand, the geometry of $a, b$-invariants for $X$ has been worked out in \cite[Section 9.3]{LTDuke}. First of all we have $Z_0 \subset \bold B_+(L) = E$, so we have $Z_0 \subset Z$. In \cite{LTDuke}, Lehmann and the author showed that (1) all subvarieties with higher $a$-invariants are contained in $E\cup D_1\cup D_2$; (2) the only thin maps $h : Y \rightarrow X$ such that the image is not contained in $E\cup D_1\cup D_2$, $(Y, h^*L)$ is adjoint rigid, $\dim Y < \dim X$, and $(a(X, L), b(\mathbb Q, X, L)) \leq (a(Y, h^*L), b(\mathbb Q, Y, h^*L))$ are the images of the fibers of one of the projections $W \rightarrow S \times S \rightarrow \mathbb P^1$.
These imply that $Z_1 \subset Z$. Next we know that the geometric fundamental group of $\pi_1(X \setminus E\cup D_1\cup D_2)$ is $\mathbb Z/2$. Thus it follows from \cite{Sen17} that $(W, f^*L)$ is the only cover with $a(W, f^*L) = a(X, L) = 1$ and $(W, f^*L)$ is adjoint rigid. Finally by arguing as \cite[Example 8.6]{LTDuke}, one can show that all non-trivial twists of $f : W \rightarrow X$ have $a, b$-invariants less than $a(X, L), b(\mathbb Q, X, L)$. All together these imply that $Z_2 = f(W(\mathbb Q)).$ Thus we conclude
\[
Z = Z_0 \cup Z_1 \cup Z_2.
\]

\nocite{*}
\bibliographystyle{alpha}
\bibliography{absurvey}

\end{document}